\numberwithin{equation}{section}
\newtheorem{theorem}{Theorem}[section]
\newtheorem{lemma}[theorem]{Lemma}
\newtheorem{conjecture}[theorem]{Conjecture}
\theoremstyle{definition}
\newtheorem{def-prop}[theorem]{Definition-Proposition}
\newtheorem{remark}[theorem]{Remark}
\newtheorem{example}[theorem]{Example}
\newtheorem*{acknowledgement}{Acknowledgement}
\DeclareMathOperator{\Ass}{Ass}
\DeclareMathOperator{\Proj}{Proj}
\newcommand{\PP}{{\mathbb P}}
\newcommand{\NN}{{\mathbb N}}
\newcommand{\QQ}{{\mathbb Q}}
\newcommand{\XX}{{\mathbb X}}
\newcommand{\kk}{{\mathbbm k}}
\newcommand{\remain}[2]{\overline{#1}^{#2}}
\def\pp{{\frak p}}
\def\x{{\bf x}}
\def\y{{\bf y}}
\def\1{{\bf 1}}
\def\0{{\bf 0}}
\def\rhoa{{\rho_a}}
\def\rholim{\rho_a^{\limsup}}
\def\irho{\overline{\rho}}
\def\irhoa{\overline{\rho}_a}
\def\irholim{\overline{\rho}_a^{\limsup}}
\begin{document}
	
\title{Resurgence numbers of fiber products of projective schemes}

\author{Sankhaneel Bisui}
\address{Tulane University \\ Department of Mathematics \\
	6823 St. Charles Ave. \\ New Orleans, LA 70118, USA}
\email{sbisui@tulane.edu}

\author{Huy T\`ai H\`a}
\address{Tulane University \\ Department of Mathematics \\
	6823 St. Charles Ave. \\ New Orleans, LA 70118, USA}
\email{tha@tulane.edu}

\author{A.V. Jayanthan}
\address{Department of Mathematics, Indian Institute of Technology Madras \\
Chennai, Tamil Nadu, India - 600036.}
\email{jayanav@iitm.ac.in}

\author{Abu Chackalamannil Thomas}
\address{Tulane University \\ Department of Mathematics \\
	6823 St. Charles Ave. \\ New Orleans, LA 70118, USA}
\email{athoma17@tulane.edu}

\keywords{Resurgence number, asymptotic resurgence, points, symbolic powers, containments between powers}
\subjclass[2010]{13F20, 14N05, 13A02, 13P10}

\begin{abstract}
We investigate the resurgence and asymptotic resurgence numbers of fiber products of projective schemes. Particularly, we show that while the asymptotic resurgence number of the $k$-fold fiber product of a projective scheme remains unchanged, its resurgence number could strictly increase.
\end{abstract}

\maketitle


\section{Introduction}

Inspired by the well-celebrated result of Ein-Lazarsfeld-Smith \cite{ELS} and Hochster-Huneke \cite{HH}, and driven by a series of conjectures and questions due to Harbourne-Huneke \cite{HH-2011}, containments between symbolic and ordinary powers of ideals have evolved to be a highly active research topic in recent years (cf. \cite{A-2017, BCH-2014, BH-2007, CEHH-2017, CGMLLPS-2016, DJ-2013, DSTG-2013, DTG-2017, KKM-2017, LM-2015, S-2015, SS-2017}). The resurgence number (defined by Bocci-Harbourne \cite{BH-2007}) and the asymptotic resurgence number (defined by Guardo-Harbourne-Van Tuyl \cite{GHV-2012}) are measures of the non-containments between these powers of ideals. Specifically, for an ideal $I$ in a polynomial ring, the \emph{resurgence number} and the \emph{asymptotic resurgence number} of $I$ are given by
$$ \rho(I) = \sup\Big\{\dfrac{m}{r} ~\Big|~ I^{(m)} \not\subseteq I^r\Big\} \text{ and }
\rhoa(I) = \sup\Big\{\dfrac{m}{r} ~\Big|~ I^{(mt)} \not\subseteq I^{rt}, t \gg 0\Big\}.$$

It is easy to see that $\rhoa(I) \le \rho(I)$ for any ideal $I$. However, a priori, it is not quite clear how different these invariants could be.  In fact, if one replaces the ordinary power of $I$ by its integral closure and defines
$$\irho(I)  = \sup\Big\{\dfrac{m}{r} ~\Big|~ I^{(m)} \not\subseteq \overline{I^r}\Big\} \text{ and }
\irhoa(I) = \sup\Big\{\dfrac{m}{r} ~\Big|~ I^{(mt)} \not\subseteq \overline{I^{rt}}, t \gg 0\Big\},$$
then the main result of a recent work of Dipasquale-Francisco-Mermin-Schweig \cite{DFMS-2018} shows that
$$\irho(I) = \irhoa(I) = \rhoa(I).$$
Moreover, since these invariants are hard to compute, there are very few examples where $\rho(I)$ and $\rhoa(I)$ are known explicitly (cf. \cite{DHNSSTG-2015}).
Our goal in this paper is to study the resurgence and asymptotic resurgence numbers of fiber products of projective schemes. Particularly, we exhibit a pathological example of the difference between these two invariants, and provide a large family of ideals for which the asymptotic resurgence number could be computed explicitly.

Our results show that while the asymptotic resurgence of a fiber product of projective schemes can be computed via that of given schemes, it is not necessarily the case for the resurgence number. To be more specific, let $A = \kk[\PP^N_\kk]$, $B = \kk[\PP^M_\kk]$, $X = \Proj A/I$, and $Y = \Proj B/J$, for homogeneous ideals $I \subseteq A$ and $J \subseteq B$. Then, the fiber product $X \times_\kk Y$, embedded in $\PP^N_\kk \times_\kk \PP^M_\kk$, is defined by ideal $I+J \subseteq A \otimes_\kk B$. We prove the following theorems.

\medskip

\noindent{\bf Theorems \ref{thm.rhoa} and \ref{thm.rho}.} Let $I \subseteq A$ and $J \subseteq B$ be nonzero proper homogeneous ideals. Let $I+J$ denote the sum of extensions of $I$ and $J$ in $A \otimes_\kk B$. Then
\begin{enumerate}
	\item $\rhoa(I+J) = \max \{\rhoa(I), \rhoa(J)\}.$
	\item $\max\{\rho(I), \rho(J)\} \le \rho(I+J) \le \rho(I)+\rho(J).$
\end{enumerate}

The bounds in Theorem \ref{thm.rho} may be strict. Particularly, we show that by taking $k$-fold fiber products of a projective scheme, as the asymptotic resurgence number remains the same, the resurgence number could strictly increase.

\medskip

\noindent{\bf Theorem \ref{thm.example}.} There exists a set of points $\XX \subseteq \PP^2_\kk$ such that if $I_\XX^{[k]}$ represents the defining ideal of the $k$-fold fiber product of $\XX$, embedded in the multiprojective space $\PP^2_\kk \times_\kk \dots \times_\kk \PP^2_\kk$ ($k$ times), then
$$\rhoa(I_\XX^{[k]}) = \rhoa(I_\XX) \text{ and } \rho(I_\XX^{[k]}) > \rho(I_\XX).$$

We shall now briefly describe our methods in proving these results. To prove Theorem \ref{thm.rhoa}, we establish the following statements (see Section \ref{sec.resurgence} for the definition of $\rholim(\bullet)$ and $\irholim(\bullet)$):
\begin{enumerate}
\item $\rhoa(I+J) \le \max\{\rholim(I), \rholim(J)\} \le \rholim(I+J)$;
\item $\rholim(I) = \irholim(I) = \rhoa(I)$.
\end{enumerate}
Statement (1) is achieved by a careful analysis of containments between powers of $I$, $J$ and $(I+J)$, and invoking the binomial expansion given in \cite{HNTT-2017}, which states that
$(I+J)^{(h)} = \sum_{k=0}^h I^{(k)}J^{(h-k)}.$
Statement (2) is proved by employing techniques in \cite{DFMS-2018}, in order to show that $\rholim(I) = \irholim(I)$, and making use of the main result of \cite{DFMS-2018}, which gives $\irholim(I) = \rhoa(I)$.

Theorem \ref{thm.rho} is established using a similar line of arguments as that of statement (1). Finally, Theorem \ref{thm.example} arises as a consequence of Theorem \ref{thm.kfold}, which derives a non-containment for powers of $I_\XX^{[k]}$ based on a given non-containment for powers of $I_\XX$. This result is achieved by an interesting Gr\"obner basis argument.

We assume that the reader is familiar with basic constructions in commutative algebra and algebraic geometry. For unexplained terminology, we refer the reader to standard texts in the research area \cite{Eisenbud, Hartshorne}.

\begin{acknowledgement} The authors thank Louiza Fouli and Paolo Mantero for pointing out a mistake in the first draft of the paper. The authors also thank Elena Guardo and Alexandra Seceleanu for many stimulating discussions on related topics. Part of this work was done while the third author visited the other authors at Tulane University --- the authors thank Tulane University for its hospitality. The second author is partially supported by Louisiana Board of Regents (grant \#LEQSF(2017-19)-ENH-TR-25). Some of the computation in this paper was done using Macaulay 2 \cite{M2}.
\end{acknowledgement}


\section{Resurgence and asymptotic resurgence numbers} \label{sec.resurgence}

In this section, we investigate the resurgence and asymptotic resurgence numbers of fiber products of projective schemes. We shall start by recalling relevant definitions and a result of \cite{DFMS-2018} that we will use.

Recall that for an ideal $I$ in a commutative ring $R$ and $m \in \NN$, the \emph{$m$-th symbolic power} of $I$ is defined to be
$$I^{(m)} = \bigcap_{\pp \in \Ass(I)} \left(I^mR_\pp \cap R\right).$$
Recall also that for an ideal $I$, the \emph{resurgence number} and \emph{asymptotic resurgence number} of $I$ are defined to be
$$ \rho(I) = \sup\Big\{\dfrac{m}{r} ~\Big|~ I^{(m)} \not\subseteq I^r\Big\} \text{ and }
\rhoa(I) = \sup\Big\{\dfrac{m}{r} ~\Big|~ I^{(mt)} \not\subseteq I^{rt}, t \gg 0\Big\}.$$
Related to these resurgence numbers, we further have the following invariants
\begin{align*}
\rho(I,t) & = \sup\Big\{\dfrac{m}{r} ~\Big|~ I^{(m)} \not\subseteq I^r, m \ge t, r \ge t\Big\},\\
\rholim(I) & = \limsup_{t \rightarrow \infty} \rho(I,t).
\end{align*}
It is easy to see from the definition that $\rhoa(I) \le \rholim(I) \le \rho(I)$ for any ideal $I$.

In a recent work \cite{DFMS-2018}, DiPasquale, Francisco, Mermin and Schweig introduced similar invariants replacing the ordinary powers with their integral closures. Particularly, they define
\begin{align*}
\irho(I) & = \sup\Big\{\dfrac{m}{r} ~\Big|~ I^{(m)} \not\subseteq \overline{I^r}\Big\}, \\
\irhoa(I) & = \sup\Big\{\dfrac{m}{r} ~\Big|~ I^{(mt)} \not\subseteq \overline{I^{rt}}, t \gg 0\Big\}, \\
\irho(I,t) & = \sup\Big\{\dfrac{m}{r} ~\Big|~ I^{(m)} \not\subseteq \overline{I^r}, m \ge t, r \ge t\Big\},\\
\irholim(I) & = \limsup_{t \rightarrow \infty} \irho(I,t).
\end{align*}
By definition, we have $\irhoa(I) \le \irholim(I) \le \irho(I)$. It also follows from the definition that
$$\irhoa(I) \le \rhoa(I), \irholim(I) \le \rholim(I) \text{ and } \irho(I) \le \rho(I).$$
We shall make use of the following interesting connection between these invariants.

\begin{theorem}[\protect{\cite[Proposition 4.2]{DFMS-2018}}] \label{thm.DFMS}
	Let $I$ be any ideal in a polynomial ring. Then
	$$\rhoa(I) = \irhoa(I) = \irholim(I) = \irho(I).$$
\end{theorem}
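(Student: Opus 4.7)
The trivial inequalities $\irhoa(I) \le \irholim(I) \le \irho(I)$ and $\irhoa(I) \le \rhoa(I)$ follow directly from the definitions, so to collapse all four invariants it will suffice to establish the two nontrivial bounds $\irho(I) \le \irhoa(I)$ and $\rhoa(I) \le \irho(I)$; chained with the trivial ones these give $\irhoa(I) \le \rhoa(I) \le \irho(I) \le \irhoa(I)$, squeezing $\irholim(I)$ along with everything else.

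My plan for $\irho(I) \le \irhoa(I)$ is to upgrade a single non-containment into a non-containment for every power. Given a witness $f \in I^{(m)} \setminus \overline{I^r}$, I would argue that $f^t \notin \overline{I^{rt}}$ for every $t \ge 1$: a hypothetical monic integral equation $(f^t)^k + a_1 (f^t)^{k-1} + \cdots + a_k = 0$ with $a_i \in I^{rti}$ rewrites, using $I^{rti} = (I^r)^{ti}$, as a monic integral equation of $f$ over $I^r$, contradicting $f \notin \overline{I^r}$. Combined with the easy inclusion $(I^{(m)})^t \subseteq I^{(mt)}$ (immediate from the local description of symbolic powers), this produces witnesses $f^t \in I^{(mt)} \setminus \overline{I^{rt}}$ valid for all $t$, so $m/r$ lies in the defining set of $\irhoa(I)$, and taking supremum gives the bound.

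For $\rhoa(I) \le \irho(I)$, the plan is to convert an almost-integral containment into an honest one by absorbing a bounded shift. The essential technical input I will invoke is that in a Noetherian polynomial ring the integral closure $\bigoplus_n \overline{I^n}$ of the Rees algebra $\bigoplus_n I^n$ is module-finite over it; selecting homogeneous module generators of maximum degree $c$ then gives a constant $c$ with $\overline{I^n} \subseteq I^{n-c}$ for all $n \ge c$. Granting this, if $m/r > \irho(I)$ then for $t$ large the ratio $mt/(rt+c)$ still strictly exceeds $\irho(I)$, so by definition $I^{(mt)} \subseteq \overline{I^{rt+c}}$, and the shift bound finishes with $\overline{I^{rt+c}} \subseteq I^{rt}$. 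Thus $I^{(mt)} \subseteq I^{rt}$ for $t \gg 0$, putting $m/r$ outside the defining set of $\rhoa(I)$; taking infimum over such $m/r$ yields $\rhoa(I) \le \irho(I)$. Securing the constant $c$ is the main obstacle --- once the module-finiteness of the integral closure of the Rees algebra is in hand, everything else is a quick chase through the definitions.
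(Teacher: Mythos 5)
Your proposal is correct, but note that the paper does not prove this statement at all: Theorem \ref{thm.DFMS} is quoted verbatim from \cite[Proposition 4.2]{DFMS-2018} and used as a black box, so there is no internal argument to compare against. What you have written is essentially a self-contained reconstruction of the DFMS result, and both halves check out. For $\irho(I) \le \irhoa(I)$, your power trick is sound: an equation of integral dependence of $f^t$ over $I^{rt}$, after regrouping the coefficients $a_i \in I^{rti} = (I^r)^{ti}$ (with the remaining coefficients taken to be $0$), is an equation of integral dependence of $f$ over $I^r$, so a witness $f \in I^{(m)} \setminus \overline{I^r}$ yields witnesses $f^t \in I^{(mt)} \setminus \overline{I^{rt}}$ for all $t$; DFMS obtain the same propagation via Rees valuations, so your route is if anything more elementary. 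For $\rhoa(I) \le \irho(I)$, the linear shift $\overline{I^n} \subseteq I^{n-c}$ for $n \ge c$, obtained from module-finiteness of the integral closure of the Rees algebra \cite[Proposition 5.3.4]{HS-06}, is legitimate for ideals of a polynomial ring and is exactly the input the paper itself invokes in the proof of Lemma \ref{lem.rhoalim}; the limit $mt/(rt+c) \to m/r$ then gives $I^{(mt)} \subseteq \overline{I^{rt+c}} \subseteq I^{rt}$ for $t \gg 0$, excluding any $m/r > \irho(I)$ from the defining set of $\rhoa(I)$. Chaining $\irhoa(I) \le \rhoa(I) \le \irho(I) \le \irhoa(I)$ and squeezing $\irholim(I)$ between $\irhoa(I)$ and $\irho(I)$ completes the argument, so your write-up would make the paper's use of this theorem self-contained rather than a citation.
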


Our first lemma establishes the equality between $\rholim(I)$ and its integral closure version. This equality is in the same spirit as that of for $\rhoa(I)$ given in \cite[Proposition 4.2]{DFMS-2018} (see Theorem \ref{thm.DFMS}).

\begin{lemma} \label{lem.rhoalim}
	Let $I$ be any ideal in a polynomial ring. Then, we have
	$$\rholim(I) = \irholim(I).$$
\end{lemma}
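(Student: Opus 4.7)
The plan is to deduce the lemma from Theorem~\ref{thm.DFMS}, which already guarantees $\irholim(I) = \rhoa(I) = \irho(I)$. The inequality $\irholim(I) \le \rholim(I)$ is immediate because $I^r \subseteq \overline{I^r}$ forces $\irho(I,t) \le \rho(I,t)$ for each $t$. Conversely, if $I^{(mt)} \not\subseteq I^{rt}$ for $t \gg 0$, then for every truncation level $T$ one may choose $t$ with $\min(m,r)\,t \ge T$ to conclude $\rho(I,T) \ge m/r$, which forces $\rhoa(I) \le \rholim(I)$. Thus the content of the lemma is the bound $\rholim(I) \le \irholim(I) = \rhoa(I)$, i.e., converting an ordinary non-containment $I^{(m)} \not\subseteq I^r$ into a non-containment against the integral closure $\overline{I^{r'}}$ with $r'$ only a bounded amount away from $r$.

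The central tool I would invoke is the regular-ring form of the Brian\c{c}on--Skoda theorem (due to Lipman--Sathaye/Huneke): in the polynomial ring $R$ of Krull dimension $d$, there is an absolute constant $c = d - 1$ such that $\overline{I^{r+c}} \subseteq I^r$ for every ideal $I$ and every $r \ge 1$. Contrapositively, a non-containment $I^{(m)} \not\subseteq I^r$ automatically upgrades to $I^{(m)} \not\subseteq \overline{I^{r+c}}$. The step I expect to require the most care is picking the precise version of Brian\c{c}on--Skoda so that the additive shift $c$ is uniform in $I$ (depending only on the ambient dimension), since any dependence on, say, the number of generators of $I$ would not produce an error term shrinking to zero after passing to the limsup.

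With the shift in hand the argument is short. Fix $t \ge 1$ and consider any pair $(m,r)$ with $m, r \ge t$ and $I^{(m)} \not\subseteq I^r$. The shifted pair $(m, r+c)$ still satisfies $m, r+c \ge t$ and $I^{(m)} \not\subseteq \overline{I^{r+c}}$, so Theorem~\ref{thm.DFMS} yields
$$\frac{m}{r+c} \;\le\; \irho(I) \;=\; \rhoa(I).$$
Therefore $m/r = \bigl(m/(r+c)\bigr)\cdot(r+c)/r \le \rhoa(I)(1 + c/t)$. Taking the supremum over admissible pairs gives $\rho(I,t) \le \rhoa(I)(1 + c/t)$, and passing to the limsup as $t \to \infty$ yields $\rholim(I) \le \rhoa(I)$. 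Combined with the easy inequality $\rholim(I) \ge \rhoa(I)$ and the identity $\irholim(I) = \rhoa(I)$ from Theorem~\ref{thm.DFMS}, all three quantities coincide and in particular $\rholim(I) = \irholim(I)$.
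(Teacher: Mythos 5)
Your proof is correct, but it takes a genuinely different route from the paper's. The paper proves $\rholim(I) \le \irholim(I)$ directly and self-containedly: it obtains the additive shift from the finite generation of the integral closure of the Rees algebra of $I$ (so there is a constant $k$, depending on $I$, with $\overline{I^{r+k}} \subseteq I^r$ for all $r$), and then runs an $\epsilon$-threshold comparison of $\rho(I,t)$ against $\irho(I,t_0)$, never invoking Theorem \ref{thm.DFMS} inside the lemma. You instead get the shift from the Brian\c{c}on--Skoda theorem with $c$ depending only on the ambient dimension, and then route the conclusion through the identity $\irho(I) = \rhoa(I) = \irholim(I)$ of Theorem \ref{thm.DFMS}, deducing $\rho(I,t) \le \rhoa(I)(1 + c/t)$ and hence $\rholim(I) \le \rhoa(I) = \irholim(I)$. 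Both arguments are sound; yours makes the lemma a quick corollary of the quoted DFMS result, while the paper's version isolates the comparison of the two limsups from \cite{DFMS-2018} entirely. One remark: your worry that the shift must be uniform in $I$ is unnecessary --- the limsup defining $\rholim(I)$ is taken over $t$ for a single fixed ideal, so any constant depending only on $I$ (the number of generators in the Lipman--Sathaye form, or the paper's Rees-algebra constant $k$) yields an error term $c/t \to 0$ just as well; this also sidesteps the minor point that achieving $c = d-1$ for an arbitrary ideal uses reductions of analytic spread at most $d$, which requires an infinite-residue-field reduction step when $\kk$ is finite.
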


\begin{proof} It is clear that if $I^{(h)} \not\subseteq \overline{I^r}$, then $I^{(h)} \not\subseteq I^r$. Thus, for each $t \in \NN$, we have $\irho(I,t) \le \rho(I,t)$. This implies that $\irholim(I) \le \rholim(I)$. It remains to prove that
	$$\rholim(I) \le \irholim(I).$$
Consider any $\theta \in \QQ$ such that $\theta > \irholim(I)$. It suffices to show that $\rholim(I) \le \theta$.

It is known that the integral closure of the Rees algebra of $I$ is finitely generated over the Rees algebra of $I$ (see, for example, \cite[Proposition 5.3.4]{HS-06}). Thus, there exists an integer $k$ such that $\overline{I^r} = I^{r-k}\overline{I^k} \subseteq I^{r-k}$ for all $r \ge k$.

Since $\theta > \limsup_{t \rightarrow \infty} \rho(I,t)$, there exists $t_0 \in \NN$ such that $\theta > \irho(I,t)$ for all $t \ge t_0$. Set $\epsilon = \theta - \irho(I,t_0) > 0$. Choose $r_0 \in \NN$ such that $\dfrac{k\theta}{r_0} < \epsilon$. Then, for any $h, r \in \NN$ such that $h, r \ge \max\{t_0,r_0+k\}$ and $\dfrac{h}{r} = \theta$, we have $\dfrac{h}{r+k} > \dfrac{h}{r} - \epsilon > \irho(I,t_0)$. This implies that $I^{(h)} \subseteq \overline{I^{r+k}} \subseteq I^r$.  Therefore, $\rho(I,t) \le \theta$ for all $t \ge \max\{t_0, r_0+k\}$. We conclude that $\rholim(I) \le \theta$, and the assertion is proved.
\end{proof}

Before proceeding, let us fix a number of notations for the rest of the paper. Let $\kk$ denote a field, let $\PP^N_\kk$ be the $N$-dimensional projective space over $\kk$, and let $\kk[\PP^N_\kk]$ represent its corresponding polynomial ring. For fixed positive integers $N$ and $M$, let $A = \kk[\PP^N_\kk]$ and $B = \kk[\PP^M_\kk]$. Let $I \subseteq A$ and $J \subseteq B$ be nonzero proper homogeneous ideals, and let $X = \Proj A/I$ and $Y = \Proj B/J$. It is a basic fact that
$$X \times_\kk Y = \text{Bi-Proj } R/(I+J) \subseteq \PP^N_\kk \times_\kk \PP^M_\kk,$$
where $R = A \otimes_\kk B$, and $I+J$ represents the sum of extensions of $I$ and $J$ in $R$.

The next few lemmas are essential in the computation of $\rhoa(I+J)$. For simplicity of notation, we shall use $\x$ and $\y$ to represent the coordinates of $\PP^N_\kk$ and $\PP^M_\kk$, respectively.

\begin{lemma} \label{lem.rhoalimmax}
	$\rholim(I+J) \ge \max\{\rholim(I), \rholim(J)\}.$
\end{lemma}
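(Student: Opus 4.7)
The plan is to prove the two inequalities $\rholim(I+J) \ge \rholim(I)$ and $\rholim(I+J) \ge \rholim(J)$ separately; by symmetry in the roles of $I$ and $J$, it suffices to establish the first one. I will deduce it from the pointwise estimate $\rho(I, t) \le \rho(I+J, t)$ for every $t \in \NN$, and then take $\limsup$ as $t \to \infty$.

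To obtain $\rho(I, t) \le \rho(I+J, t)$, I will argue the contrapositive at the level of a single pair $(m, r)$: if $I^{(m)} \not\subseteq I^r$ in $A$, then $(I+J)^{(m)} \not\subseteq (I+J)^r$ in $R = A \otimes_\kk B$. The key device is the $\kk$-algebra homomorphism $\phi : R \to A$ that sends every $\y$-variable to $0$. Restricted to the subring $A \subseteq R$, this map is the identity, and since every generator of $J$ lies in the kernel $(y_0, \ldots, y_M)R$, every summand $I^a J^b$ of $(I+J)^r = \sum_{a+b = r} I^a J^b$ with $b \ge 1$ is killed, so that $\phi\bigl((I+J)^r\bigr) = I^r$.

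Given this, suppose for contradiction that $I^{(m)} \not\subseteq I^r$ while $(I+J)^{(m)} \subseteq (I+J)^r$. The binomial expansion $(I+J)^{(m)} = \sum_{k=0}^{m} I^{(k)} J^{(m-k)}$ of \cite{HNTT-2017} yields, via the $k = m$ summand, $I^{(m)} R \subseteq (I+J)^{(m)} \subseteq (I+J)^r$. Applying $\phi$ and using that $\phi$ fixes $I^{(m)} \subseteq A$ pointwise while collapsing $(I+J)^r$ onto $I^r$, we conclude $I^{(m)} \subseteq I^r$ in $A$, contradicting the hypothesis. Thus each witness $(m, r)$ for $\rho(I, t)$ is also a witness for $\rho(I+J, t)$ with the same ratio $m/r$, and hence $\rho(I, t) \le \rho(I+J, t)$.

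The only subtle point worth monitoring is that the ideal $I^{(m)}$ must be interpreted consistently on both sides of the containment chain: on the left it is extended to $R$, while $\phi$ then recovers it unchanged as an ideal of $A$ because it is generated by elements of $A$. With this in hand, taking $\limsup_{t \to \infty}$ of $\rho(I, t) \le \rho(I+J, t)$ together with the symmetric estimate for $J$ delivers the asserted inequality.
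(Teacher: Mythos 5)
Your proof is correct and follows essentially the same route as the paper's: both deduce the lemma from the key containment fact that $I^{(m)}R \subseteq (I+J)^r$ forces $I^{(m)} \subseteq I^r$, and both propagate this through the $\rho(\cdot,t)$ statistics before passing to $\limsup$. Your evaluation homomorphism $\phi\colon R \to A$ (setting the $\y$-variables to $0$) is a cleaner way of packaging the paper's term-cancellation argument, but it is the same underlying idea.
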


\begin{proof} Consider any $\theta \in \QQ$ such that $\theta > \rholim(I+J)$. By definition, there eixsts $t_0 \in \NN$ such that for all $t \ge t_0$, $\theta > \rho(I+J,t)$. Thus, for any $h,r \in \NN$ such that $h,r \ge t \ge t_0$ and $\dfrac{h}{r} \ge \theta$, we must have $(I+J)^r \supseteq (I+J)^{(h)} \supseteq I^{(h)}$.
	
Let $f(\x)$ be a minimal generator of $I^{(h)} \subseteq A$. Then, $f(\x) \in (I+J)^r = \sum_{i=0}^r I^iJ^{r-i}$. Thus, we can write $f(\x) = \sum_{i=0}^r f_i(\x,\y)$, where $f_i(\x,\y) \in I^iJ^{r-i}$.
Observe that for each $i < r$, every term of every element in $J^{r-i}$ must contain nontrivial powers of the variables $\y$. Therefore, the same is true also for every element in $I^iJ^{r-i}$. This, since $f(\x)$ contains no terms involving the variables $\y$, implies that the $f_i(\x,\y)$, for $i < r$, must cancel leaving $f(\x) = f_r(\x,\y) \in I^r$. That is, $I^{(h)} \subseteq I^r$.

We have shown that $\theta > \rho(I,t)$ for all $t \ge t_0$. Hence, $\theta > \rholim(I)$. Similarly, we also have $\theta > \rho(J,t)$ for all $t \ge t_0$ and, thus, $\theta > \rholim(J)$. The conclusion now follows since this is true for any rational number $\theta > \rholim(I+J)$.
\end{proof}

\begin{lemma} \label{lem.rhoamax}
	$\rhoa(I+J) \le \max\{\rholim(I), \rholim(J)\}.$
\end{lemma}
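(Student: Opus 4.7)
The plan is to reduce to an integral-closure containment via Theorem \ref{thm.DFMS} and Lemma \ref{lem.rhoalim}, and then expand symbolic powers of the sum using the binomial expansion of \cite{HNTT-2017}. Concretely, since $\rhoa(I+J) = \irhoa(I+J)$ and $\rholim(I) = \irholim(I)$, $\rholim(J) = \irholim(J)$, it is equivalent to establish
$$\irhoa(I+J) \le \max\{\irholim(I),\irholim(J)\}.$$

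I would fix a rational $\theta = m/r$ strictly larger than the right-hand side and choose an intermediate rational $\theta'$ with $\max\{\irholim(I),\irholim(J)\} < \theta' < \theta$. By definition of $\irholim$, there exists $t_0 \in \NN$ such that $I^{(a)} \subseteq \overline{I^b}$ and $J^{(a)} \subseteq \overline{J^b}$ whenever $a,b \ge t_0$ and $a/b \ge \theta'$. The goal is then to show $(I+J)^{(mt)} \subseteq \overline{(I+J)^{rt}}$ for all sufficiently large $t$; this forces $\theta \ge \irhoa(I+J)$, and letting $\theta$ shrink to $\max\{\irholim(I),\irholim(J)\}$ yields the inequality.

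Using the binomial expansion $(I+J)^{(mt)} = \sum_{k=0}^{mt} I^{(k)} J^{(mt-k)}$ together with the standard containment $\overline{I^{r_1}} \cdot \overline{J^{r_2}} \subseteq \overline{I^{r_1} J^{r_2}} \subseteq \overline{(I+J)^{r_1+r_2}}$, it suffices to produce, for each $k$, nonnegative integers $r_1, r_2$ with $r_1 + r_2 \ge rt$, $I^{(k)} \subseteq \overline{I^{r_1}}$, and $J^{(mt-k)} \subseteq \overline{J^{r_2}}$. When both $k$ and $mt-k$ exceed $t_0 \theta'$, the choice $r_1 = \lfloor k/\theta' \rfloor$ and $r_2 = \lfloor (mt-k)/\theta' \rfloor$ works, since $r_1,r_2 \ge t_0$, the ratios $k/r_1$ and $(mt-k)/r_2$ are at least $\theta'$, and $r_1 + r_2 \ge mt/\theta' - 2 \ge rt$ once $t$ is large (possible because $\theta > \theta'$). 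When instead $k < t_0\theta'$ (or symmetrically $mt-k < t_0\theta'$), I would dump the entire budget into the other factor, taking $r_1 = 0$ and $r_2 = rt$, and verify $(mt-k)/(rt) \ge \theta'$ for $t$ large, which follows because $m/r = \theta > \theta'$ and $k$ is bounded.

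The main obstacle is the bookkeeping: a single threshold on $t$ must simultaneously accommodate the interior case, both boundary cases, and the lower-bound requirements $r_1, r_2, k, mt-k \ge t_0$ needed to invoke the estimate $\irho(-,t_0) < \theta'$. Once the uniform choice of $t$ is fixed, the three cases combine to give the desired containment $(I+J)^{(mt)} \subseteq \overline{(I+J)^{rt}}$, and the conclusion of the lemma follows.
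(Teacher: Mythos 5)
Your proposal is correct, and its skeleton is the same as the paper's: fix a rational $\theta$ above the right-hand side, use the binomial expansion $(I+J)^{(mt)}=\sum_k I^{(k)}J^{(mt-k)}$ from \cite{HNTT-2017}, and handle each summand by a three-case analysis ($k$ small, $mt-k$ small, both large) using the uniform containments that the limsup-type invariants provide beyond a threshold $t_0$. The genuine difference is packaging: you first pass to integral closures via Theorem \ref{thm.DFMS} and Lemma \ref{lem.rhoalim}, and in the interior case you give each factor its own floor, $r_1=\lfloor k/\theta'\rfloor$ and $r_2=\lfloor (mt-k)/\theta'\rfloor$, paying a slack of $2$ in the exponent budget that is absorbed by the auxiliary $\theta'$ and the multiplicativity $\overline{I^{r_1}}\,\overline{J^{r_2}}\subseteq\overline{(I+J)^{r_1+r_2}}$. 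The paper instead stays entirely with ordinary powers, splitting the budget exactly as $i'=\lfloor i/\theta\rfloor$ and $rt-i'$ and checking $(ht-i)/(rt-i')\ge\theta$ via the inequality $ht/\theta-rt>1$; note that your integral-closure detour buys nothing here, since your two-floor split works verbatim with ordinary powers using $I^{r_1}J^{r_2}\subseteq (I+J)^{r_1+r_2}$, whereas your auxiliary $\theta'$ trades the paper's sharper exact split for simpler bookkeeping. Both arguments are valid; just make sure, as you indicate, that the single threshold on $t$ is chosen to satisfy simultaneously the finitely many conditions $r_1,r_2,k,mt-k\ge t_0$ and $mt/\theta'-2\ge rt$.
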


\begin{proof} It suffices to show that for any $\theta \in \QQ$ such that $\theta > \max\{\rholim(I), \rholim(J)\}$, we have $\rhoa(I+J) \le \theta$. Set $\epsilon = \theta - \max\{\rholim(I), \rholim(J)\} > 0$.
	
By definition, there exists $q_0 \in \NN$ such that for any $p,q \in \NN$, $q \ge q_0$ and $\dfrac{p}{q} \ge \theta$, we must have $I^{(p)} \subseteq I^q$ and $J^{(p)} \subseteq J^q$. Choose $t_0 \in \NN$ such that $\dfrac{q_0\theta}{t_0} < \epsilon$. We shall prove that for any $h,r \in \NN$ such that $\dfrac{h}{r} > \theta$ and $t \gg 0$, we must have $(I+J)^{(ht)} \subseteq (I+J)^{rt}$, which then implies that $\rhoa(I+J) \le \theta$.

By \cite[Theorem 3.4]{HNTT-2017}, we have
$$(I+J)^{(ht)} = \sum_{i = 0}^{ht} I^{(i)}J^{(ht-i)}.$$
Thus, it is enough to show that for any $0 \le i \le ht$, the containment $I^{(i)} J^{(ht-i)} \subseteq (I+J)^{rt}$ holds.

Indeed, by choosing $t \ge \max\left\{t_0, \dfrac{\theta}{h-r\theta}\right\}$, we have $\theta - \dfrac{q_0\theta}{t} > \max \{\rholim(I), \rholim(J)\}$ and $\dfrac{ht}{\theta} - rt > 1$. If $i < q_0\theta$, then $\dfrac{ht-i}{rt} = \dfrac{h}{r} - \dfrac{i}{rt} > \dfrac{h}{r} - \dfrac{q_0\theta}{t} > \max \{\rholim(I), \rholim(J)\}$. Thus, $J^{(ht-i)} \subseteq J^{rt}$ for all $t \gg 0$. This implies that $I^{(i)}J^{(ht-i)} \subseteq J^{(ht-i)} \subseteq J^{rt} \subseteq (I+J)^{rt}$.

If $i > rt\theta - q_0\theta$, then $\dfrac{i}{rt} > \theta - \dfrac{q_0\theta}{rt} \ge \theta - \dfrac{q_0\theta}{t} > \max\{\rholim(I), \rholim(J)\}$. Thus, $I^{(i)} \subseteq I^{rt}$ for $t \gg 0$. This also implies that $I^{(i)}J^{(ht-i)} \subseteq I^{rt} \subseteq (I+J)^{rt}$.

Suppose now that $q_0\theta \le i \le rt\theta - q_0\theta$. Let $i' = \big\lfloor \dfrac{i}{\theta} \big\rfloor \ge q_0$. Then, $I^{(i)} \subseteq I^{i'}$, since $\dfrac{i}{i'} \ge \theta$ and $i,i' \ge q_0$. Also, since $\dfrac{ht}{\theta} - rt > 1$, we have $rt - \dfrac{ht-i}{\theta} < i' \le \dfrac{i}{\theta}$.  This implies that $\dfrac{ht-i}{rt-i'} \ge \theta$. Moreover, since $i \le (rt-q_0)\theta$, we have $q_0 \le rt - \dfrac{i}{\theta} \le rt - i'$. Therefore, $J^{(ht-i)} \subseteq J^{rt-i'}$. Hence,
$$I^{(i)}J^{(ht-i)} \subseteq I^{i'}J^{rt-i'} \subseteq (I+J)^{rt}.$$
\end{proof}

We are now ready to state our main results of the section, which compute $\rhoa(I+J)$ and give bounds for $\rho(I+J)$.

\begin{theorem} \label{thm.rhoa}
	Let $I \subseteq A$ and $J \subseteq B$ be nonzero proper homogeneous ideals. Let $I+J$ be the sum of extensions of $I$ and $J$ in $R = A \otimes_\kk B$. Then, we have
	$$\rhoa(I+J) = \irhoa(I+J) = \irho(I+J) = \max\{ \rhoa(I), \rhoa(J)\}.$$
\end{theorem}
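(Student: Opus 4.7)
The plan is to leverage the earlier lemmas together with a non-containment lifting argument, and then invoke Theorem~\ref{thm.DFMS} for the remaining identifications. First, combining Theorem~\ref{thm.DFMS} with Lemma~\ref{lem.rhoalim} yields $\rholim(L) = \rhoa(L)$ for every ideal $L$ in a polynomial ring. Applied to $I$ and $J$, Lemma~\ref{lem.rhoamax} translates directly into the upper bound
$$\rhoa(I+J) \le \max\{\rholim(I), \rholim(J)\} = \max\{\rhoa(I), \rhoa(J)\}.$$

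For the matching lower bound, I plan to show that every non-containment for $I$ (and, symmetrically, for $J$) propagates to $I+J$. The key tool is the $\kk$-algebra retraction $\pi \colon R \to A$ sending $\y \mapsto 0$: because $J$ is a proper homogeneous ideal of $B = \kk[\y]$, each of its generators has positive degree in $\y$, so $\pi(J) = 0$ and hence $\pi((I+J)^r) \subseteq (\pi(I+J))^r = I^r$. Since $\pi$ fixes $A$ pointwise, this yields the contraction identity $A \cap (I+J)^r = I^r$ for every $r \ge 0$. Moreover, the binomial expansion $(I+J)^{(m)} = \sum_{k=0}^m I^{(k)} J^{(m-k)}$ of \cite[Theorem~3.4]{HNTT-2017} (take $k=m$ with $J^{(0)} = R$) gives $I^{(m)} \subseteq (I+J)^{(m)}$.

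Combining these two facts by contraposition: if $(I+J)^{(mt)} \subseteq (I+J)^{rt}$ at some $t$, then intersecting with $A$ forces $I^{(mt)} \subseteq I^{rt}$ at that same $t$. Consequently, any witness of $\rhoa(I)$ is also a witness of $\rhoa(I+J)$, so $\rhoa(I+J) \ge \rhoa(I)$, and by symmetry $\rhoa(I+J) \ge \rhoa(J)$. Together with the upper bound this yields $\rhoa(I+J) = \max\{\rhoa(I), \rhoa(J)\}$, and the remaining equalities $\rhoa(I+J) = \irhoa(I+J) = \irho(I+J)$ follow at once from Theorem~\ref{thm.DFMS} applied to the ideal $I+J$. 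The only non-formal ingredient is the contraction identity $A \cap (I+J)^r = I^r$, but the retraction $\pi$ makes this essentially a diagram chase, so I do not foresee any serious obstacle.
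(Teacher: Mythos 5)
Your proof is correct and achieves the same result, but the lower bound is handled through a genuinely different (and somewhat more direct) route than the paper's. The paper establishes $\rholim(I+J) \ge \max\{\rholim(I), \rholim(J)\}$ via Lemma~\ref{lem.rhoalimmax} --- which argues by decomposing a generator $f(\x)$ of $I^{(h)}$ inside $(I+J)^r = \sum_i I^i J^{r-i}$ and observing that all summands with $i<r$ involve positive powers of $\y$ and hence must cancel --- and then chains $\rhoa(I+J) = \irhoa(I+J) = \irholim(I+J) = \rholim(I+J)$ (Theorem~\ref{thm.DFMS} plus Lemma~\ref{lem.rhoalim}) to convert this into the desired statement about $\rhoa$. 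You instead package that cancellation phenomenon cleanly as the contraction identity $A \cap (I+J)^r = I^r$, proved via the $\kk$-algebra retraction $\pi\colon R \to A$, $\y \mapsto 0$, and combine it with the inclusion $I^{(m)}R \subseteq (I+J)^{(m)}$ from the binomial expansion to conclude directly that every non-containment $I^{(mt)} \not\subseteq I^{rt}$ lifts to $(I+J)^{(mt)} \not\subseteq (I+J)^{rt}$, giving $\rhoa(I+J) \ge \rhoa(I)$ at once. This bypasses Lemma~\ref{lem.rhoalimmax} entirely and avoids detouring through $\rholim(I+J)$. Your upper bound uses the same ingredients as the paper (Lemma~\ref{lem.rhoamax} together with the identification $\rholim = \rhoa$ that follows from Theorem~\ref{thm.DFMS} and Lemma~\ref{lem.rhoalim}), and the remaining integral-closure equalities come from Theorem~\ref{thm.DFMS} just as in the paper. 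It is worth noting that the retraction you isolate is precisely the mechanism the paper invokes again in the proof of Theorem~\ref{thm.rho} (``by the same line of arguments as that of Lemma~\ref{lem.rhoalimmax}''), so your formulation makes explicit and reusable what the paper leaves as an ad hoc cancellation argument.
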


\begin{proof} It follows from Theorem \ref{thm.DFMS} that $\rhoa(I+J) = \irhoa(I+J) = \irho(I+J)$. It remains to establish the last equality. By definition, $\irhoa(I+J) \le \irholim(I+J) \le \irho(I+J)$. Thus, together with Lemmas \ref{lem.rhoalim}, \ref{lem.rhoalimmax} and \ref{lem.rhoamax}, we get
\begin{align}
\rhoa(I+J) & = \irhoa(I+J) = \irho(I+J) = \irholim(I+J) = \rholim(I+J) \label{eq.rhoa1}\\
& \ge \max\{\rholim(I), \rholim(J)\}  \nonumber \\
& \ge \rhoa(I+J). \nonumber
\end{align}
Hence, we must have equalities in (\ref{eq.rhoa1}) and the assertion follows.
\end{proof}

\begin{theorem} \label{thm.rho}
	Assume the same hypotheses as in Theorem \ref{thm.rhoa}. Then,
	$$\max \{\rho(J), \rho(J)\} \le \rho(I+J) \le \rho(I) + \rho(J).$$
\end{theorem}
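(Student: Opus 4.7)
The plan is to prove the two inequalities separately. Both are direct adaptations of techniques already established in the section.

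For the lower bound $\max\{\rho(I), \rho(J)\} \le \rho(I+J)$, I would adapt the proof of Lemma~\ref{lem.rhoalimmax} by replacing $\rholim$ with $\rho$. By symmetry it suffices to show $\rho(I) \le \rho(I+J)$. Suppose $I^{(h)} \not\subseteq I^r$. The binomial expansion $(I+J)^{(h)} = \sum_{k=0}^h I^{(k)} J^{(h-k)}$ from \cite[Theorem 3.4]{HNTT-2017} (with $k = h$) gives $I^{(h)} \subseteq (I+J)^{(h)}$. Assume for contradiction that $(I+J)^{(h)} \subseteq (I+J)^r$. Then any $f(\x) \in I^{(h)}$ can be written as $\sum_{i=0}^r f_i(\x,\y)$ with $f_i \in I^i J^{r-i}$. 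Since $J$ is a proper homogeneous ideal of $\kk[\y]$, every element of $J^{r-i}$ lies in $(\y)^{r-i}$, so each term of $f_i$ for $i<r$ involves the $\y$ variables. Comparing $\y$-free parts forces $f(\x) = f_r(\x,\y) \in I^r$, so $I^{(h)} \subseteq I^r$, a contradiction. Hence $(I+J)^{(h)} \not\subseteq (I+J)^r$, which gives $h/r \le \rho(I+J)$, and taking the supremum yields $\rho(I) \le \rho(I+J)$.

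For the upper bound $\rho(I+J) \le \rho(I) + \rho(J)$, I would show that whenever positive integers $h, r$ satisfy $h/r > \rho(I) + \rho(J)$, the containment $(I+J)^{(h)} \subseteq (I+J)^r$ holds. Invoking the binomial expansion again, it is enough to prove $I^{(i)} J^{(h-i)} \subseteq (I+J)^r$ for every $0 \le i \le h$. I would argue by dichotomy on $i$: if $i > r\rho(I)$, then $i/r > \rho(I)$ and so the definition of $\rho(I)$ yields $I^{(i)} \subseteq I^r$, giving $I^{(i)} J^{(h-i)} \subseteq I^r \subseteq (I+J)^r$. Otherwise $i \le r\rho(I)$, so by the hypothesis $h > r(\rho(I)+\rho(J))$ we obtain $h - i \ge h - r\rho(I) > r\rho(J)$; hence $J^{(h-i)} \subseteq J^r$, and $I^{(i)} J^{(h-i)} \subseteq J^r \subseteq (I+J)^r$. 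Since every index $0 \le i \le h$ falls into one of these two cases, the containment $(I+J)^{(h)} \subseteq (I+J)^r$ follows.

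Neither step poses a serious obstacle: the lower-bound argument is verbally identical to Lemma~\ref{lem.rhoalimmax} with the $\limsup$ removed, and the upper-bound argument is the non-asymptotic analogue of Lemma~\ref{lem.rhoamax}. The only mildly delicate point is that no "$t \gg 0$" buffer is available, so the split $i > r\rho(I)$ versus $i \le r\rho(I)$ must cover every $i \in \{0,\ldots,h\}$ exactly, with no slack allotted to intermediate values of $i$. The strict inequality $h > r(\rho(I)+\rho(J))$ is precisely what ensures the two cases overlap rather than leaving a gap.
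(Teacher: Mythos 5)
Your proof is correct and takes essentially the same approach as the paper: for the lower bound you unpack the same degree analysis (from Lemma~\ref{lem.rhoalimmax}) that the paper cites by reference, and your use of the contrapositive (a non-containment for $I$ yields one for $I+J$) is logically equivalent to the paper's phrasing with a rational $\theta > \rho(I+J)$. The upper bound argument — the binomial expansion from \cite[Theorem 3.4]{HNTT-2017} followed by the dichotomy $i > r\rho(I)$ versus $i \le r\rho(I)$ — matches the paper's proof line for line.
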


\begin{proof} Consider any $\theta \in \QQ$ such that $\theta > \rho(I+J)$. Then, by definition, for any $h, r \in \NN$ such that $\dfrac{h}{r} \ge \theta$, we have $(I+J)^r \supseteq (I+J)^{(h)} \supseteq I^{(h)}$. By the same line of arguments as that of Lemma \ref{lem.rhoalimmax}, it then can be shown that $I^r \supseteq I^{(h)}$. This implies that $\rho(I) \le \theta$. Similarly, we have $\rho(J) \le \theta$. Thus, $\theta \ge \max\{\rho(I), \rho(J)\}$. Since this is true for any $\theta > \rho(I+J)$, we deduce that $\rho(I+J) \ge \max\{\rho(I), \rho(J)\}$.
	
We shall now prove the second inequality. Consider any $h,r \in \NN$ such that $\dfrac{h}{r} > \rho(I)+\rho(J)$. It suffices to show that $(I+J)^{(h)} \subseteq (I+J)^r$. Indeed, by \cite[Theorem 3.4]{HNTT-2017}, we have
$$(I+J)^{(h)} = \sum_{i=0}^h I^{(i)}J^{(h-i)}.$$
Thus, it suffices to prove that for any $0 \le i \le h$, $I^{(i)}J^{(h-i)} \subseteq (I+J)^r$.

To this end, observe that if $i \le r\rho(I)$, then $h-i > r\rho(J)$, and so $J^{(h-i)} \subseteq J^r \subseteq (I+J)^r$, which implies that $I^{(i)}J^{(h-i)} \subseteq J^{(h-i)} \subseteq (I+J)^r$. On the other hand, if $i > r\rho(I)$, then $I^{(i)} \subseteq I^r \subseteq (I+J)^r$, which also implies that $I^{(i)}J^{(h-i)} \subseteq I^{(i)} \subseteq (I+J)^r$.
\end{proof}

\begin{example}
Let $p$ be an odd prime. Let $\kk$ be a field of characteristic $p$ consisting of $q= p^t$ elements, for some $t \in \NN$, and let $\kk'$ be the prime subfield of $\kk$. Let $I \subseteq \kk[\PP^N_\kk]$ and $J \subseteq \kk[\PP^M_\kk]$ be the defining ideals of all but one $\kk'$-points in $\PP^N_\kk$ and $\PP^M_\kk$, respectively, and let $I+J$ denote the sum of their extensions in $\kk[\PP^N_\kk \times_\kk \PP^M_\kk]$. Then, it follows from \cite[Theorem 3.2]{DHNSSTG-2015}, and Theorems \ref{thm.rhoa} and \ref{thm.rho} that
$$\rhoa(I+J) = \dfrac{\max\{N,M\}(q-1)+1}{q},$$
$$\dfrac{\max\{N,M\}(q-1)+1}{q} \le \rho(I+J) \le \dfrac{(N+M)(q-1)+2}{q}.$$
\end{example}


\section{$k$-fold fiber products of a projective scheme} \label{sec.kfold}

In this section, we shall focus on $k$-fold fiber products of a projective scheme, and give an example where the asymptotic resurgence number stays the same while the resurgence number strictly increases. The next few lemmas derive a non-containment for $I+J$ from that of $I$ and $J$.

Recall that we use $\x$ and $\y$ to denote the variables in $A$ and $B$. For term orders $\prec_1$ on $A$ and $\prec_2$ on $B$, we define $\prec_{1,2}$ to be the term order on $\kk[\x,\y] = R = A \otimes_\kk B$ obtained by combining $\prec_1$ and $\prec_2$ such that $x_i > y_j$ for all $i,j$.

\begin{lemma} \label{lem.Gbasis}
Let $Q \subseteq A$ and $H \subseteq B$ be ideals. Suppose that $G$ and $G'$ are Gr\"obner bases for $Q$ and $H$ with respect to some term orders $\prec_1$ and $\prec_2$, respectively. Then $G \cup G'$ is a Gr\"obner basis for $Q+H \subseteq R$ with respect to the term order $\prec_{1,2}$.
\end{lemma}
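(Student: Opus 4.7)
The plan is to verify Buchberger's criterion: to conclude that $G \cup G'$ is a Gr\"obner basis for $Q+H$ with respect to $\prec_{1,2}$, it suffices to show that every $S$-polynomial $S(f_1,f_2)$ with $f_1,f_2 \in G \cup G'$ reduces to zero modulo $G \cup G'$ under the division algorithm with respect to $\prec_{1,2}$. I would first record the preliminary observation that, because $\prec_{1,2}$ is built so that $x_i > y_j$ for all $i,j$, its restriction to pure $\x$-monomials agrees with $\prec_1$ and its restriction to pure $\y$-monomials agrees with $\prec_2$. In particular, for any $f \in A$ the leading monomial of $f$ computed in $R$ under $\prec_{1,2}$ equals the leading monomial of $f$ computed in $A$ under $\prec_1$, and symmetrically on the $B$-side.

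Next, I would split the verification into three cases depending on whether each of $f_1, f_2$ comes from $G$ or from $G'$. If both $f_1, f_2 \in G$, then $S(f_1,f_2) \in A$ and, by hypothesis, the standard division algorithm in $A$ already reduces it to zero modulo $G$. By the preliminary observation, each such reduction step remains a valid reduction step in $R$ under $\prec_{1,2}$: the leading monomial of the intermediate remainder stays a pure $\x$-monomial, and no element of $G'$ can be selected as a divisor because its leading monomial lies in $\y$ and cannot divide an $\x$-monomial. Hence $S(f_1,f_2)$ reduces to zero modulo $G \cup G'$ in $R$. The case $f_1, f_2 \in G'$ is completely symmetric.

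The remaining case $f_1 \in G$, $f_2 \in G'$ is the crucial one but also the cleanest. Here the leading monomial of $f_1$ under $\prec_{1,2}$ is a monomial purely in $\x$, while that of $f_2$ is a monomial purely in $\y$, so the two leading monomials involve disjoint variables and are therefore coprime. I would then invoke the standard product criterion (a corollary of Buchberger's theorem): whenever two polynomials have coprime leading monomials, their $S$-polynomial has a standard representation, equivalently reduces to zero modulo the pair. Thus $S(f_1, f_2)$ reduces to zero modulo $\{f_1, f_2\} \subseteq G \cup G'$, and Buchberger's criterion is verified on all pairs.

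The main obstacle is really just the bookkeeping in verifying that the two term orders $\prec_1$ and $\prec_2$ embed compatibly into $\prec_{1,2}$ so that Gr\"obner-basis properties pass unchanged from $A$ and $B$ into $R$; once that compatibility is cleanly stated, the coprime leading monomial criterion closes the mixed case with no additional computation, and the lemma follows.
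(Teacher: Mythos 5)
Your proof is correct and follows essentially the same route as the paper: both verify Buchberger's criterion, with the only nontrivial pairs being the mixed ones $a \in G$, $b \in G'$, whose leading terms are coprime. Where you invoke the standard product (coprime-leading-term) criterion, the paper simply writes out the one-line standard representation $S(a,b)=ba'-ab'$ that proves it in this case; your explicit handling of the pure $G$-$G$ and $G'$-$G'$ pairs is a small bit of bookkeeping the paper elides with ``it suffices to show.''
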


\begin{proof} It suffices to show that for any $a(\x) \in G$ and $b(\y) \in G'$, the S-polynomial $S(a(\x),b(\y))$ reduces to 0 modulo $G \cup G'$ with respect to $\prec_{1,2}$. Indeed, we may assume that $a(\x)$ and $b(\y)$ are monic polynomials, and write $a(\x) = \x^\alpha + a'(\x)$ and $b(\y) = \y^\beta + b'(\y)$, where $\x^\alpha$ and $\y^\beta$ are leading terms of $a(\x)$ and $b(\y)$ with respect to $\prec_1$ and $\prec_2$. Then,
	$$S(a(\x),b(\y)) = \y^\beta a'(\x) - \x^\alpha b'(\y) = a(\x)b'(\y) - b(\y)a'(\x)$$
which clearly reduces to 0 module $G \cup G'$. The assertion is proved.
\end{proof}

Let $G$ be a Gr\"obner basis for an ideal $I$, we shall write $\remain{f}{G}$ for the remainder of $f$ modulo $G$.

\begin{lemma} \label{lem.fg}
	Let $I \subseteq A$ and $J \subseteq B$ be ideals. Suppose that $f(\x) \in A$ and $g(\y) \in B$ are polynomials such that $f(\x) \not\in I^r$ and $g(\y) \not\in J^s$. Then, $f(\x)g(\y) \not\in (I+J)^{r+s-1}$.
\end{lemma}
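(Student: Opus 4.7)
The plan is to first reduce the target non-containment $fg \notin (I+J)^{r+s-1}$ to the stronger non-containment $fg \notin I^r + J^s$, and then to establish the latter using the Gr\"obner basis from Lemma~\ref{lem.Gbasis}.

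For the reduction, I would note that any summand $I^a J^b$ appearing in the expansion of $(I+J)^{r+s-1}$ satisfies $a + b = r + s - 1$, so by pigeonhole either $a \ge r$ or $b \ge s$, in which case $I^a J^b \subseteq I^r$ or $I^a J^b \subseteq J^s$, respectively. Summing over all such $(a,b)$ gives $(I+J)^{r+s-1} \subseteq I^r + J^s$, so it suffices to show $fg \notin I^r + J^s$. Now let $G_r \subseteq A$ and $G_s \subseteq B$ be Gr\"obner bases of $I^r$ and $J^s$ with respect to $\prec_1$ and $\prec_2$, and set $\bar f = \remain{f}{G_r}$ and $\bar g = \remain{g}{G_s}$. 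By hypothesis, $\bar f$ and $\bar g$ are both nonzero; by construction, no monomial of $\bar f$ is divisible by a leading term of $G_r$, and analogously for $\bar g$ and $G_s$. By Lemma~\ref{lem.Gbasis}, $G_r \cup G_s$ is a Gr\"obner basis for $I^r + J^s$ with respect to $\prec_{1,2}$.

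The key step I would then carry out is to verify that $\bar f \bar g$ is the remainder of $fg$ modulo $G_r \cup G_s$. Writing $f = p + \bar f$ with $p \in I^r$ and $g = q + \bar g$ with $q \in J^s$, one has $fg - \bar f \bar g = pg + \bar f q \in I^r + J^s$; and every monomial of $\bar f \bar g$ factors as $\x^\alpha \y^\beta$ where $\x^\alpha$ occurs in $\bar f$ and $\y^\beta$ occurs in $\bar g$, so since the leading terms in $G_r$ are pure monomials in $\x$ and those in $G_s$ are pure monomials in $\y$, divisibility of $\x^\alpha \y^\beta$ by such a leading term would force divisibility of $\x^\alpha$ or $\y^\beta$, contradicting how $\bar f$ and $\bar g$ were chosen. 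Since $\bar f$ and $\bar g$ are nonzero polynomials in disjoint sets of variables, $\bar f \bar g \neq 0$, and the Gr\"obner basis characterisation of ideal membership then yields $fg \notin I^r + J^s$, hence $fg \notin (I+J)^{r+s-1}$. I expect the only point requiring care to be this reducedness check for $\bar f \bar g$, which becomes transparent precisely because the order $\prec_{1,2}$ keeps the $\x$- and $\y$-variables separated.
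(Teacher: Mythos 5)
Your proof is correct, and it is a mild streamlining of the paper's argument rather than a fundamentally different one. The paper first passes through primary decompositions $I^r = \bigcap Q_i$, $J^s = \bigcap H_j$, applies the Gr\"obner basis argument (Lemma~\ref{lem.Gbasis} plus the fact that remainders multiply across the disjoint variable blocks) to each pair $Q_i + H_j$ to conclude $f \in Q_i$ or $g \in H_j$, and then runs a short combinatorial argument over all $(i,j)$ to force $f \in I^r$ or $g \in J^s$. You instead apply the same Gr\"obner-basis mechanism directly to $I^r + J^s$: since Lemma~\ref{lem.Gbasis} holds for arbitrary ideals, nothing requires the ideals to be primary, and your explicit verification that $\bar f\,\bar g$ is the reduced normal form of $fg$ (because each monomial $\x^\alpha\y^\beta$ of $\bar f\,\bar g$ would have to be divisible by a pure $\x$- or pure $\y$-monomial, contradicting reducedness of $\bar f$ or $\bar g$) fills in the step the paper only asserts as an observation. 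Both proofs hinge on the same two facts --- the pigeonhole containment $(I+J)^{r+s-1} \subseteq I^r + J^s$ and the multiplicativity of remainders under $\prec_{1,2}$ --- but yours removes an unnecessary reduction to primary components and is thus a bit shorter and more self-contained.
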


\begin{proof} Let $I^r = \bigcap_{i=1}^u Q_i$ and $J^s = \bigcap_{j=1}^v H_j$ be primary decompositions of $I^r$ and $J^s$, respectively. It is easy to see that $(I+J)^{r+s-1} \subseteq I^r + J^s \subseteq \bigcap_{i,j} (Q_i + H_j)$. Thus, if $f(\x) g(\y) \in (I+J)^{r+s-1}$, then we must have $f(\x)g(\y) \in Q_i + H_j$ for all $1 \le i \le u$ and $1 \le j \le v$. We shall show that this forces either $f(\x) \in Q_i$ for all $i$ or $g(\y) \in H_j$ for all $j$, which then results in either $f(\x) \in I^r$ or $g(\y) \in J^s$, a contradiction.
	
Indeed, fix an $i$ and $j$. Let $G$ and $G'$ be Gr\"obner bases for $Q_i$ and $H_j$, respectively, with respect to some term orders $\prec_1$ in $A$ and $\prec_2$ in $B$. By Lemma \ref{lem.Gbasis}, $G \cup G'$ is a Gr\"obner basis for $Q_i+H_j$ with respect to $\prec_{1,2}$. Since $f(\x)g(\y) \in Q_i+H_j$, we have $\remain{f(\x)g(\y)}{G \cup G'} = 0$.
Observe, from the definition of $\prec_{1,2}$, that
$$\remain{f(\x)g(\y)}{G \cup G'} = \remain{\remain{f(\x)}{G} g(\y)}{G'} =  \remain{f(\x)}{G}\remain{g(\y)}{G'}.$$
Thus, we have either $\remain{f(\x)}{G} = 0$ or $\remain{g(\y)}{G'} = 0$. That is, either $f(\x) \in Q_i$ or $g(\y) \in H_j$.

Since this is true for any $i$ and $j$, either $f(\x) \in Q_i$ for all $i$ or, if there exists an $i$ such that $f(\x) \not\in Q_i$, then that forces $g(\y) \in H_j$ for all $j$. The assertion is proved.
\end{proof}

\begin{lemma} \label{lem.IJ}
	Let $I \subseteq A$ and $J \subseteq B$ be nonzero proper homogeneous ideals. Suppose that $I^{(h)} \not\subseteq I^r$ and $J^{(k)} \not\subseteq J^s$. Then,
	$$(I+J)^{(h+k)} \not\subseteq (I+J)^{r+s-1}.$$
\end{lemma}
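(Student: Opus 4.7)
The plan is to reduce the non-containment for $I+J$ to the two separate non-containments by producing a single explicit witness $fg$, using the binomial expansion of symbolic powers from \cite{HNTT-2017} to place $fg$ inside $(I+J)^{(h+k)}$, and Lemma \ref{lem.fg} to keep it out of $(I+J)^{r+s-1}$.

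First I would invoke the hypotheses to pick $f(\x) \in I^{(h)} \setminus I^r$ and $g(\y) \in J^{(k)} \setminus J^s$; the variable structure is automatic because $I \subseteq A = \kk[\x]$ and $J \subseteq B = \kk[\y]$. Next I would apply \cite[Theorem 3.4]{HNTT-2017}, which gives the identity
$$(I+J)^{(h+k)} = \sum_{i=0}^{h+k} I^{(i)} J^{(h+k-i)}.$$
Isolating the single summand at $i = h$ yields the containment $I^{(h)} J^{(k)} \subseteq (I+J)^{(h+k)}$, so in particular $f(\x)g(\y) \in (I+J)^{(h+k)}$.

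To finish, I would apply Lemma \ref{lem.fg} directly to $f(\x)$ and $g(\y)$: its hypotheses are exactly $f(\x) \not\in I^r$ and $g(\y) \not\in J^s$, and its conclusion is $f(\x) g(\y) \not\in (I+J)^{r+s-1}$. Combining the last two sentences, the element $f(\x)g(\y)$ belongs to $(I+J)^{(h+k)}$ but not to $(I+J)^{r+s-1}$, which is exactly the non-containment claimed.

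There is essentially no serious obstacle left, since the substantive work is packaged in Lemma \ref{lem.fg} and the binomial formula; the only care required is to verify that the factor at index $i=h$ in the HNTT expansion is really $I^{(h)} J^{(k)}$ (rather than some reindexed version), and to confirm that the witness $f(\x)g(\y)$ is genuinely nonzero in $R$, which follows from $\kk$-flatness of the tensor product $A \otimes_\kk B$ together with $f \neq 0$ and $g \neq 0$.
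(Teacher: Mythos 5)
Your proof is correct and matches the paper's argument exactly: both choose witnesses $f \in I^{(h)} \setminus I^r$, $g \in J^{(k)} \setminus J^s$, use the $i=h$ summand of the HNTT binomial expansion to place $fg$ in $(I+J)^{(h+k)}$, and invoke Lemma~\ref{lem.fg} to exclude $fg$ from $(I+J)^{r+s-1}$. The extra care you flag at the end (indexing, nonvanishing of $fg$) is sound but routine and not dwelt on in the paper.
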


\begin{proof} Let $f(\x) \in I^{(h)} \setminus I^r$ and $g(\y) \in J^{(k)} \setminus J^s$. By Lemma \ref{lem.fg}, we have $f(\x)g(\y) \not\in (I+J)^{r+s-1}$. On the other hand, by \cite[Theorem 3.4]{HNTT-2017}, we have
	$$f(\x)g(\y) \in I^{(h)}J^{(k)} \subseteq (I+J)^{(h+k)}.$$
Hence, $(I+J)^{(h+k)} \not\subseteq (I+J)^{r+s-1}$.
\end{proof}

\begin{remark} In the remaining of the paper, for a homogeneous ideal $I \subseteq A$ and $X = \Proj A/I$, we shall
\begin{enumerate}
\item use $X^{[k]}$ to denote the $k$-fold fiber product $\underbrace{X \times_\kk \dots \times_\kk X}_{k \text{ times}}$, and
\item use $I^{[k]}$ to denote the defining ideal of $X^{[k]}$ embedded in $\underbrace{\PP^N_\kk \times_\kk \dots \times_\kk \PP^N_\kk}_{k \text{ times}}$.
\end{enumerate}
\end{remark}

Our next result provides a lower bound for the resurgence number of $k$-fold fiber products.

\begin{theorem} \label{thm.kfold}
	Let $I \subseteq A$ be a nonzero proper homogeneous ideal. Suppose that $I^{(h)} \not\subseteq I^r$ for some $h,r \in \NN$. Then, for any $k \in \NN$, we have
	$$\rho(I^{[k]}) \ge \dfrac{kh}{k(r-1)+1}.$$
\end{theorem}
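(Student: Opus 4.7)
The plan is to prove the inequality by induction on $k$, using Lemma \ref{lem.IJ} as the key building block. Write $A_j = \kk[\x^{(j)}]$ for the $j$-th copy of the coordinate ring ($j = 1,\dots,k$), and let $I_j \subseteq A_1 \otimes_\kk \dots \otimes_\kk A_k$ denote the extension of $I$ coming from $A_j$. By definition, $I^{[k]} = I_1 + I_2 + \cdots + I_k$.

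The base case $k=1$ is immediate: the hypothesis $I^{(h)} \not\subseteq I^r$ directly gives $\rho(I) \ge h/r = h/(1 \cdot (r-1)+1)$. For the inductive step, I would assume that
$$ \bigl(I^{[k-1]}\bigr)^{((k-1)h)} \not\subseteq \bigl(I^{[k-1]}\bigr)^{(k-1)(r-1)+1}. $$
Now $I^{[k-1]}$ sits inside the polynomial ring $A_1 \otimes \cdots \otimes A_{k-1}$, and $I_k$ sits in the disjoint polynomial ring $A_k$. Since the two variable sets are disjoint, Lemma \ref{lem.IJ} applies verbatim (with $I^{[k-1]}$ playing the role of $I$ and the copy of $I$ in $A_k$ playing the role of $J$). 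Combining the inductive hypothesis with the original non-containment $I_k^{(h)} \not\subseteq I_k^r$ yields
$$ \bigl(I^{[k]}\bigr)^{((k-1)h + h)} \not\subseteq \bigl(I^{[k]}\bigr)^{((k-1)(r-1)+1)+r-1}. $$
The exponent on the right simplifies to $(k-1)(r-1) + r = k(r-1)+1$, and the exponent on the left is $kh$, so by definition of the resurgence,
$$ \rho(I^{[k]}) \ge \frac{kh}{k(r-1)+1}. $$

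The main point to check carefully is that Lemma \ref{lem.IJ} can indeed be iterated in this way. Its proof relies on two ingredients: the symbolic-power binomial expansion of \cite{HNTT-2017}, which applies to any two ideals living in disjoint variable sets, and the Gröbner basis argument of Lemmas \ref{lem.Gbasis} and \ref{lem.fg}, which only requires that the two ideals live in disjoint polynomial rings and that the combined term order separates the variable sets. Both ingredients are insensitive to how many ``factors'' are bundled into the first ideal, so regrouping $I^{[k]} = (I_1 + \cdots + I_{k-1}) + I_k$ is legitimate. I would also note that $I^{[k-1]}$ remains a nonzero proper homogeneous ideal, so the hypotheses of Lemma \ref{lem.IJ} hold at every step.

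I do not anticipate a serious obstacle beyond checking this regrouping rigorously; the arithmetic of the exponents is the small bookkeeping step that drives the denominator $k(r-1)+1$, and the induction packages everything neatly.
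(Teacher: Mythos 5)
Your proof is correct and takes essentially the same route as the paper: the paper's proof is exactly the one-line observation that $\bigl(I^{[k]}\bigr)^{(kh)} \not\subseteq \bigl(I^{[k]}\bigr)^{k(r-1)+1}$ follows by iterating Lemma \ref{lem.IJ} with $J = I^{[k-1]}$, which is the same induction you carry out (with the roles of the two ideals swapped, which is harmless by symmetry). You also correctly carry the non-containment (rather than merely the resurgence bound) through the induction, which is the detail one needs to make the iteration go.
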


\begin{proof} It suffices to show that $\big(I^{[k]}\big)^{(kh)} \not\subseteq \big(I^{[k]}\big)^{k(r-1)+1}.$ Indeed, this non-containment follows inductively by applying Lemma \ref{lem.IJ} with $J = I^{[k-1]}$.
\end{proof}

\begin{example} Let $p$ be an odd prime. Let $\kk$ be a field of characteristic $p$ and let $\kk'$ be its prime subfield. Let $N = \dfrac{p+1}{2}$ and let $I$ be the defining ideal of all but one $\kk'$-points in $\PP^N_\kk$. It follows from \cite[Theorem 3.9]{HS-2015} that
$$I^{(N+1)} \not\subseteq I^2.$$

By applying Theorem \ref{thm.kfold} we get, for any $k \ge 2$,
$$\rho(I^{[k]}) \ge \dfrac{k(N+1)}{k+1}.$$
Observe that $\lim_{k \rightarrow \infty} \dfrac{k(N+1)}{k+1} = N+1$. Thus, by taking $p \rightarrow \infty$, we get a family of ideals $I$ (depending on $p$) such that $\rho(I^{[k]})$ gets arbitrarily large as $k \rightarrow \infty$.
\end{example}

As a consequence of Theorem \ref{thm.kfold}, we give an example where $\rhoa(I^{[k]})$ remains unchanged while $\rho(I^{[k]})$ strictly increases.

\begin{theorem} \label{thm.example}
There exists a set of points $\XX \subseteq \PP^2_\kk$ such that if $I_\XX^{[k]}$ represents the defining ideal of the $k$-fold fiber product of $\XX$, embedded in the multiprojective space $\PP^2_\kk \times_\kk \dots \times_\kk \PP^2_\kk$ ($k$ times), then
$$\rhoa(I_\XX^{[k]}) = \rhoa(I_\XX) \text{ and } \rho(I_\XX^{[k]}) > \rho(I_\XX).$$
\end{theorem}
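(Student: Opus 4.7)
The plan is to combine Theorem \ref{thm.rhoa} (applied inductively) with Theorem \ref{thm.kfold} and the classical Ein-Lazarsfeld-Smith/Hochster-Huneke containment. The first equality $\rhoa(I_\XX^{[k]})=\rhoa(I_\XX)$ is cheap: since $\XX^{[k]} = \XX^{[k-1]} \times_\kk \XX$, its defining ideal in the appropriate tensor-product polynomial ring is the sum of extensions $I_\XX^{[k-1]} + I_\XX$, so Theorem \ref{thm.rhoa} gives $\rhoa(I_\XX^{[k]}) = \max\{\rhoa(I_\XX^{[k-1]}),\rhoa(I_\XX)\}$, and a straightforward induction collapses this maximum to $\rhoa(I_\XX)$.

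For the strict inequality I would pick $\XX$ to be a configuration already known to fail a strong non-containment at $r=2$. The natural choice, and the one already invoked in the paper, is the following: let $\kk$ be a field of characteristic $3$ and let $\XX \subseteq \PP^2_\kk$ be the set of all but one of the $\mathbb{F}_3$-rational points of $\PP^2_\kk$. By \cite[Theorem 3.9]{HS-2015} (the case $N=2$, $p=3$), one has $I_\XX^{(3)} \not\subseteq I_\XX^2$. Feeding $h=3$ and $r=2$ into Theorem \ref{thm.kfold} produces the lower bound
\[
\rho(I_\XX^{[k]}) \;\ge\; \dfrac{3k}{k(2-1)+1} \;=\; \dfrac{3k}{k+1}
\]
for every $k \ge 1$.

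On the other side, since $\XX$ is a reduced zero-dimensional subscheme of $\PP^2_\kk$, the big height of $I_\XX$ equals $2$, so the Ein-Lazarsfeld-Smith/Hochster-Huneke uniform containment $I_\XX^{(2r)} \subseteq I_\XX^r$ forces $\rho(I_\XX) \le 2$. Now observe that $\frac{3k}{k+1} > 2$ precisely when $k > 2$. Thus for any $k \ge 3$,
\[
\rho(I_\XX^{[k]}) \;\ge\; \dfrac{3k}{k+1} \;>\; 2 \;\ge\; \rho(I_\XX),
\]
which, together with the inductive computation of $\rhoa(I_\XX^{[k]})$, produces the desired $\XX$ and $k$.

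The only real obstacle in executing this plan is having an explicit point configuration in $\PP^2$ with a strong enough non-containment, and the reference \cite{HS-2015} already cited earlier in the paper supplies exactly what is needed. Once that configuration is fixed, the argument is just a juxtaposition of the lower bound of Theorem \ref{thm.kfold} with the ELS/HH ceiling of $2$, so no further delicate symbolic-power computation is required.
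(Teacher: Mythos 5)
Your proposal is essentially correct and uses the same skeleton as the paper (induction via Theorem \ref{thm.rhoa} for the asymptotic resurgence, Theorem \ref{thm.kfold} for the lower bound on $\rho(I_\XX^{[k]})$), but you pick a different point configuration, and this choice costs you the $k=2$ case. The paper takes $\XX$ to be the Fermat configuration in $\PP^2$ defined by $I = (x(y^n-z^n),\, y(z^n-x^n),\, z(x^n-y^n))$, for which the \emph{exact} value $\rho(I) = 3/2$ is known from \cite{DHNSSTG-2015} together with $I^{(3)} \not\subseteq I^2$; feeding $h=3$, $r=2$ into Theorem \ref{thm.kfold} then yields $\rho(I^{[k]}) \ge \frac{3k}{k+1} > \frac{3}{2}$ for every $k \ge 2$. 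You instead take the complement of one $\mathbb{F}_3$-point in $\PP^2_\kk$ (char.\ 3), where you also have $I_\XX^{(3)} \not\subseteq I_\XX^2$ from \cite{HS-2015}, but for $\rho(I_\XX)$ you only have the Ein--Lazarsfeld--Smith/Hochster--Huneke ceiling $\rho(I_\XX) \le 2$. Since $\frac{3k}{k+1} = 2$ exactly when $k=2$, your argument only forces strict inequality for $k \ge 3$, so you have a genuine gap at $k=2$. If the theorem is read as asserting the strict inequality for all $k \ge 2$ (which is what the paper proves), you would either need to sharpen the upper bound for $\rho(I_\XX)$ below $2$ for your configuration, or switch to the Fermat example; if the theorem is read existentially in $k$, your argument is complete as written.
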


\begin{proof} Let $n \in \NN$ and let $\kk$ be a field of characteristic not equal to 2 containing $n$ distinct roots of unity. Let $A = \kk[x,y,z]$ and let
$$I = (x(y^n-z^n), y(z^n-x^n), z(x^n-y^n)).$$
Then $I = I_\XX$ is the defining ideal of a set $\XX$ of $n^2+3$ points in $\PP^2_\kk$ (see \cite{HS-2015}). It is known from \cite[Theorem 2.1]{DHNSSTG-2015} that $\rhoa(I) = (n+1)/n$. It is also known from \cite[Theorem 2.1]{DHNSSTG-2015} and \cite[Proposition 2.1]{HS-2015} that $\rho(I) = 3/2$ and that $I^{(3)} \not\subseteq I^2$.

It follows by a repeated application of Theorem \ref{thm.rhoa} that $\rhoa(I^{[k]}) = \rhoa(I)$. Furthermore, it follows from Theorem \ref{thm.kfold} that, for all $k \ge 2$,
$$\rho(I^{[k]}) \ge \dfrac{3k}{k+1} > \dfrac{3}{2} = \rho(I).$$
The theorem is proved.
\end{proof}

We end the paper with the following conjectures, that are inspired by what Theorems \ref{thm.kfold} and \ref{thm.example} appear to indicate.

\begin{conjecture} \label{conj.kfold}
There exists a homogeneous polynomial ideal $I$ such that
$$\limsup_{k \rightarrow \infty} \rho(I^{[k]}) = \infty.$$
\end{conjecture}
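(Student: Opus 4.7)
The plan to attack Conjecture \ref{conj.kfold} is to search for a single homogeneous ideal $I$ in a polynomial ring whose $k$-fold fiber products acquire non-containments of ratio tending to infinity. A natural first step is to seek an ideal admitting an abundant family of non-containments $I^{(h_n)} \not\subseteq I^{r_n}$ --- for example, ideals of points in $\PP^N_\kk$ built from Fermat-like configurations \cite{HS-2015, DHNSSTG-2015} --- and then to invoke Lemma \ref{lem.IJ} inductively across the $k$ factors of $I^{[k]}$, distributing distinct witnesses $(h_1,r_1),\ldots,(h_k,r_k)$ among them. This produces non-containments $(I^{[k]})^{(h_1 + \cdots + h_k)} \not\subseteq (I^{[k]})^{r_1 + \cdots + r_k - (k-1)}$, and hence lower bounds $\rho(I^{[k]}) \ge (\sum h_i)/(\sum r_i - (k-1))$ which one would try to drive to infinity by optimizing over the witnesses.

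A parallel, more ambitious direction is to hunt for genuinely multiprojective non-containments in $I^{[k]}$ that do \emph{not} arise from the binomial decomposition $(I+J)^{(h)} = \sum I^{(i)} J^{(h-i)}$ of \cite{HNTT-2017}. Concretely, one could look for witnesses $F \in (I^{[k]})^{(m)} \setminus (I^{[k]})^r$ whose failure to lie in $(I^{[k]})^r$ uses the interaction of variables across different factors in an essential way --- not factoring as a product $f_1(\x^{(1)}) \cdots f_k(\x^{(k)})$ of single-factor pieces. Such witnesses, if they exist, could exceed the additive bounds produced by iterated applications of Lemma \ref{lem.IJ} and would form the backbone of a proof; a refinement of the Gr\"obner-basis analysis of Section \ref{sec.kfold} (with a primary-decomposition description of $(I^{[k]})^r$ on each combinatorial stratum of the multiprojective ambient space) is a natural tool.

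The main obstacle I foresee is a fundamental one. For any fixed ideal $I$ in a polynomial ring, $\rho(I)$ is finite (bounded above by the codimension, via Ein-Lazarsfeld-Smith and Hochster-Huneke), and every ratio $h_i/r_i$ contributing to an iterated Lemma \ref{lem.IJ} bound is controlled by $\rho(I)$. A short optimization argument then shows that $(\sum h_i)/(\sum r_i - k + 1)$ remains bounded by a constant depending only on $\rho(I)$ --- essentially the supremum of $h/(r-1)$ over single witnesses $(h,r)$ of non-containment. Consequently, the strategy of the previous paragraph based purely on Lemma \ref{lem.IJ} cannot succeed on its own, and proving the conjecture requires producing essentially new multiprojective non-containments beyond those assembled from tensor products of non-containments in $I$. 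Developing such a tool --- or, conversely, proving that no such witnesses exist for any fixed $I$ (which would refute the conjecture) --- is the heart of the problem and will likely demand a substantially new technique, perhaps through specialization arguments that embed a resurgence-growing \emph{family} of ideals into the fiber powers of a single ambient ideal.
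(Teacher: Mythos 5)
This is a \emph{conjecture} in the paper, not a theorem, so there is no proof on the paper's side to compare against: the authors state it as an open problem precisely because the tools they develop (Lemma \ref{lem.IJ}, Theorem \ref{thm.kfold}) do not resolve it. You have correctly identified why. The lower bound from Theorem \ref{thm.kfold} is $\rho(I^{[k]}) \ge \frac{kh}{k(r-1)+1}$, and as $k\to\infty$ this tends to $\frac{h}{r-1}$, which is bounded: since every non-containment $I^{(h)}\not\subseteq I^r$ has $h/r \le \rho(I) < \infty$ and (for radical $I$) necessarily $r\ge 2$, one gets $\frac{h}{r-1} = \frac{h}{r}\cdot\frac{r}{r-1} \le 2\rho(I)$. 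Your more general version with distinct witnesses $(h_1,r_1),\dots,(h_k,r_k)$ runs into the same wall: with $S_h=\sum h_i \le \rho(I)S_r$ and $S_r\ge 2k$, the ratio $S_h/(S_r-k+1) \le 2\rho(I)$ as well. So iterated application of Lemma \ref{lem.IJ} alone can never drive the lower bound to infinity, and any proof of the conjecture must produce non-containment witnesses in $(I^{[k]})^{(m)}\setminus (I^{[k]})^r$ that do not factor as products across the $k$ tensor factors, exactly as you say.

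Your proposal is therefore not a proof but an accurate diagnosis of the obstruction, and it matches the gap the paper implicitly leaves open. Two small remarks worth recording. First, Theorem \ref{thm.rho} gives the matching upper bound $\rho(I^{[k]}) \le k\,\rho(I)$, so the conjectured blow-up is at most linear in $k$; any genuinely multiprojective witness would have to exploit close to the full $k\rho(I)$ budget. Second, your closing suggestion --- to refute the conjecture by showing no such cross-factor witnesses exist --- would, combined with the bound above, actually prove $\sup_k \rho(I^{[k]})$ is finite for every $I$, which is a concrete and arguably more tractable target than the conjecture itself.
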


\begin{conjecture} \label{conj.family}
There exists a family of homogeneous polynomial ideals $\{I_k\}_{k \in \NN}$ (possibly in different polynomial rings) such that
$$\limsup_{k \rightarrow \infty} \left(\rho(I_k) - \rhoa(I_k)\right) = \infty.$$
\end{conjecture}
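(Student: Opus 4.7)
The strategy is to iterate the mechanism of Theorem~\ref{thm.example}, allowing both the base ideal and the fiber-product order to vary simultaneously. Concretely, one seeks a family of homogeneous ideals $\{J_p\}_{p \in \NN}$ (in an unbounded sequence of polynomial rings), together with integers $h_p, r_p \in \NN$, such that $J_p^{(h_p)} \not\subseteq J_p^{r_p}$ and the \emph{gap ratio} $\frac{h_p}{r_p - 1} - \rhoa(J_p)$ tends to infinity with $p$. For each such $p$, Theorem~\ref{thm.kfold} applied to $J_p$ would give
$$\rho\big(J_p^{[m]}\big) \ge \frac{m\, h_p}{m(r_p-1)+1},$$
whose limit as $m \to \infty$ is $h_p/(r_p-1)$, while iterating Theorem~\ref{thm.rhoa} yields $\rhoa(J_p^{[m]}) = \rhoa(J_p)$ for every $m$. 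One then sets $I_k = J_{p(k)}^{[m(k)]}$ along a diagonal sequence with $p(k), m(k) \to \infty$ chosen so that the lower bound above is within, say, $1$ of $h_{p(k)}/(r_{p(k)}-1)$, whence $\rho(I_k) - \rhoa(I_k) \to \infty$.

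The proof therefore reduces to exhibiting a base family with unbounded gap ratio, and this is precisely where the construction of Theorem~\ref{thm.example} is insufficient in isolation. For the Harbourne--Seceleanu ideal $I$ of all-but-one $\mathbb{F}_p$-point in $\PP^{(p+1)/2}_\kk$, the non-containment $I^{(N+1)} \not\subseteq I^2$ delivers $h/(r-1) = N+1 \approx p/2$, but a direct calculation shows $\rhoa(I) = (p^2+1)/(2p) \approx p/2$ as well, so the gap ratio stays near $3/2$ no matter how large $p$ is. Thus a ``single base, scale up the power'' approach is inherently bounded: one must either locate substantially stronger non-containments within the characteristic-$p$ point-configuration framework (non-containments $I^{(h)} \not\subseteq I^r$ in which $h/(r-1)$ grows asymptotically faster than $\rhoa(I)$), or turn to an entirely different family of ideals.

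The main obstacle is therefore the construction of the base family, and this is where the genuine content of Conjecture~\ref{conj.family} lies. Candidate arenas I would explore include squarefree monomial ideals attached to large combinatorial structures (edge ideals of dense hypergraphs, cover ideals, matroidal ideals), for which $\overline{I^r}$ admits a polyhedral description and the discrepancy with $I^r$ is amenable to direct combinatorial analysis; and high-codimension fat-point schemes, whose ``expected'' resurgence is bounded above by the Ein--Lazarsfeld--Smith inequality $\rho(I) \le \mathrm{bight}(I)$ but which a priori leaves considerable room above $\rhoa(I)$. A systematic search in such settings, exploiting Theorem~\ref{thm.DFMS} to reformulate the target $\rho(I) - \rhoa(I)$ as $\rho(I) - \irho(I)$ — a pure containment-versus-integral-closure obstruction — would be the route I would pursue to confirm the conjecture.
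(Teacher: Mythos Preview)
This statement is a \emph{conjecture} in the paper, not a theorem; the paper offers no proof and only remarks that an affirmative answer to Conjecture~\ref{conj.kfold} would imply it. There is therefore no proof in the paper to compare your proposal against.

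Your proposal is not a proof either, and you say so yourself: the entire argument is conditional on producing a base family $\{J_p\}$ with unbounded gap ratio $h_p/(r_p-1) - \rhoa(J_p)$, and you explicitly state that this construction is ``where the genuine content of Conjecture~\ref{conj.family} lies'' and that you would need to ``explore'' various candidate families. What you have written is a correct reduction of the conjecture to a more concrete sub-problem, together with an honest and accurate analysis of why the paper's own examples (the Harbourne--Seceleanu all-but-one-point configurations) do not suffice---your computation that the gap ratio tends to $3/2$ is right. But a reduction plus a list of places to look is not a proof; the missing idea is exactly the one you identify, namely an explicit base family with unbounded gap ratio, and nothing in your proposal supplies it.

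In short: the paper leaves this open, and so do you. Your outline is a reasonable research plan, but it should not be presented as a proof.
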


Note that an affirmative answer to Conjecture \ref{conj.kfold} also gives an affirmative answer to Conjecture \ref{conj.family}.


\end{document}